\newtheorem{theorem}{Theorem}[section]
\newtheorem{prop}[theorem]{Proposition}
\newtheorem{cor}[theorem]{Corollary}
\newtheorem{quest}[theorem]{Question}
\newtheorem{defn}[theorem]{Definition}
\newtheorem{lemma}[theorem]{Lemma}
\tikzset{
  bigblue/.style={circle, draw=blue!80,fill=blue!40,thick, inner sep=1.5pt, minimum size=5mm},
  bigred/.style={circle, draw=red!80,fill=red!40,thick, inner sep=1.5pt, minimum size=5mm},
  bigblack/.style={circle, draw=black!100,fill=black!40,thick, inner sep=1.5pt, minimum size=5mm},
  bluevertex/.style={circle, draw=blue!100,fill=blue!100,thick, inner sep=0pt, minimum size=2mm},
  redvertex/.style={circle, draw=red!100,fill=red!100,thick, inner sep=0pt, minimum size=2mm},
  blackvertex/.style={circle, draw=black!100,fill=black!100,thick, inner sep=0pt, minimum size=2mm},  
  whitevertex/.style={circle, draw=black!100,fill=white!100,thick, inner sep=0pt, minimum size=2mm},  
  smallblack/.style={circle, draw=black!100,fill=black!100,thick, inner sep=0pt, minimum size=1mm},  
}
\newcommand{\ec}{$2$-edge-coloured }
\renewcommand{\p@enumii}{\theenumi.}
\begin{document}

\title{Edge-coloured graph homomorphisms, paths, and duality}

\author{Kyle Booker and Richard C. Brewster\thanks{Both authors supported by NSERC}
\thanks{e-mail: rbrewster@tru.ca} \\
Dept. of Math and Stats\\ Thompson Rivers University\\ Kamloops, Canada}


\date{}

\maketitle

\begin{abstract}
We present a \ec analogue of the duality theorem for transitive
tournaments and directed paths.  Given a \ec path $P$ whose edges alternate blue
and red, we construct a \ec graph $D$ so that for any \ec graph $G$
$$
P \to G \Leftrightarrow G \not\to D.
$$
The duals are simple to construct, in particular  $|V(D)|=|V(P)|-1$.
\end{abstract}

\bigskip
\centerline{\emph{Dedicated to Gary MacGillivray on the occasion of his $60^{th}$ birthday.}}

\section{Introduction}

In this paper we study homomorphisms of edge-coloured graphs. Our main
result is a \ec analogue of the duality theorem for transitive
tournaments and directed paths.  While duality theorems for finite structures
are fully described by Ne\v{s}et\v{r}il and Tardif~\cite{DualTheorems}, 
our contribution is to present a simple construction of small (linear) duals
for \ec alternating paths,
and use them to highlight similarities and differences between edge-coloured 
graph and digraph homomorphisms.

A \emph{\ec graph $G$} is a vertex set $V(G)$ together with 
$2$ edge sets $E_1(G), E_2(G)$.  We allow loops 
and multiple edges provided parallel edges have different colours. 
The \emph{underlying graph of $G$} is the (classical)
graph with vertices $V(G)$ and edge set $E_1(G) \cup E_2(G)$, 
i.e. the graph obtained by ignoring edge colours and deleting multiple edges. 
A \ec graph is a \emph{path} if the underlying 
graph is a path. We similarly define other standard notions 
like cycle and walk.

As a convention colours 1 and 2 are blue and red respectively.
Blue edges are depicted in diagrams as solid while red edges are dashed. 
A vertex is \emph{mixed} if it is incident with both blue and red edges.
A vertex is \emph{blue only} (respectively \emph{red only}) if it is incident
only with blue (respectively red) edges.  See for example Figure~\ref{fig:Ds}:
the vertices labelled $+1$ are blue only and vertices labelled $+2$ are mixed.

Let $G$ and $H$ be \ec graphs.  
A \emph{homomorphism} $\varphi$ of $G$ to $H$ is a mapping $\varphi: V(G) \to V(H)$
such that $uv \in E_i(G)$ implies $\varphi(u) \varphi(v) \in E_i(H)$. We write
$\varphi: G \to H$ to indicate the existence of a homomorphism or simply $G \to H$
when the name of the mapping is not important.  
Homomorphisms of digraphs and relational systems are similarly defined.
If $G \to H$ and $H \to G$ we say $G$ is \emph{homomorphically equivalent} to $H$ 
and write $G \sim H$.

Finally, we remark that the above definitions naturally generalize to $k$-edge coloured
graphs.

The launching point for our work is the well known digraph homomorphism duality
theorem for transitive tournaments. Let $T_n$ be the \emph{transitive tournament on $n$ 
vertices} with vertex set $\{ 1, 2, \dots, n \}$ and arc set 
$\{ ij | 1 \leq i < j \leq n \}$. Let $\vec{P}_n$ be the directed path on $n$
vertices. Given a digraph $G$, then
$$
G \not\to T_n \Leftrightarrow \vec{P}_{n+1} \to G
$$
The digraphs $(\vec{P}_{n+1}, T_n)$ are a \emph{duality pair}. 
In general, given a relational system $F$, the pair $(F, D(F))$ is a \emph{duality pair}
if for all relational systems $G$, $G \not\to D(F)$ if and only if $F \to G$.
More generally, $\mathcal{F} = \{F_1, F_2, \dots, F_t\}$ and $H$ have 
\emph{finite duality} if for any $G$
$$
G \not\to H \Leftrightarrow F_i \to G \mbox{ for some } i.
$$
The pair $( \mathcal{F}, H)$ is a \emph{finite homomorphism duality}. 
In~\cite{bearconstruction}, Ne\v{s}et\v{r}il and Tardif
prove the existence of finite homomorphism dualities if and only if 
$\mathcal{F}$ is a set whose underlying graphs are trees.  
Their construction produces exponentially large duals.  In some cases
these duals are homomorphically equivalent to a much small structure, 
but there are trees $F$ with exponentially large dual $D(F)$
that cannot be reduced to a smaller structure~\cite{DualTheorems}. 

The motivation for this work is to explicitly (and easily) construct the duals 
for \ec alternating paths (defined below).  This is motivated by the fact that there are many 
parallels to the theory of \ec graph homomorphisms and digraph homomorphisms.
Specifically, homomorphisms of \ec graphs to alternating paths
behave like homomorphisms of digraphs to directed paths.  This similarity has been 
observed in multiple works~\cite{AM98,Brewster94,Brewster17,NesetrilRaspaud00}.
In this paper we now study homomorphisms of alternating paths into \ec graphs with the
hope that their dual \ec targets behave like transitive tournaments.
Our work elicits similarities and differences between the two families.

\section{Alternating path duality}

A \ec graph $G$ is an \emph{alternating path} if $G$ is a path whose edges 
successively alternate blue and red. 

\begin{defn}
The \ec graphs $F^B_k$ and $F^R_k$ are alternating paths 
of length $k$ with vertices $v_0, v_1, v_2, \dots, v_k$.  
When $k$ is odd the edge $v_{\lfloor{k/2} \rfloor}v_{\lceil{k/2} \rceil}$ 
is colour blue in $F^B_k$ and colour red in $F^R_k$. 
For even $k$, $F^B_k$ and $F^R_k$ are isomorphic alternating paths 
and we simply write $F_k$ to denote either.  
We define the families $\mathcal{F}_k := \{ F^R_k, F^B_k \}$ and 
$\mathcal{F} = \bigcup_{k=1}^\infty \mathcal{F}_k$.
\end{defn}

The first family of dual \ec graphs is defined as follows. 
See Figure~\ref{fig:Ds} for examples. 

\begin{defn}
Let $k \geq 1$ be an integer and let $j = \lfloor k/2 \rfloor$.  
The \ec graph $D_k$ has vertex set:
$$
V(D_k) = \left\{ \begin{array}{ll}
               \{ \pm 1, \pm 2, \dots, \pm j \} & \mbox{ if $k$ is even} \\
               \{ 0, \pm 1, \pm 2, \dots, \pm j \} & \mbox{ if $k$ is odd.}
               \end{array} \right.  
$$
There are blue loops on $\{ 1, 2, \dots, j \}$, red loops on $\{ -1, -2, \dots, -j \}$, 
and an edge $rs$ for all $|r| < |s|$. 
The edge $rs$ is blue if $r > 0$ and red if $r < 0$. 
If $r = 0$, the edge $rs$ is blue if $s > 0$ and red if $s < 0$. 
For odd values of $k$, the \ec graph $D^B_k$ (respectively $D^R_k$) is obtained 
from $D_k$ by adding a blue loop (respectively red loop) to the vertex $0$. 
We define the families 
$$
\mathcal{D}_k := \left\{ \begin{array}{ll}
  \{ D_k, D^B_k, D^R_k \} & \mbox{ if $k$ is odd,} \\
  \{ D_k \} & \mbox{ if $k$ is even,}
  \end{array} \right.
$$
and $\mathcal{D} = \bigcup_{k=1}^\infty \mathcal{D}_k$.
\end{defn}

\noindent
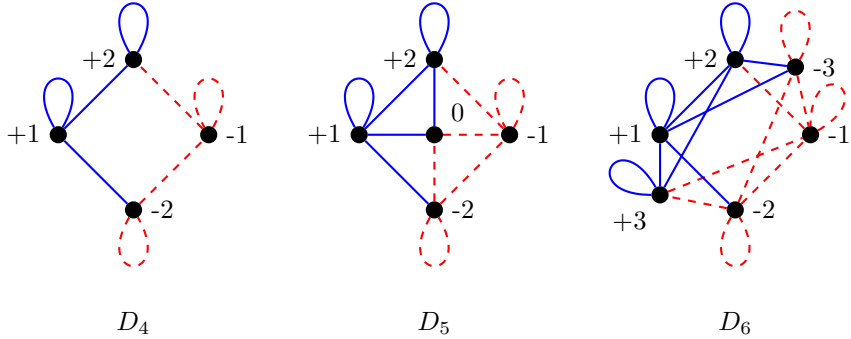
\begin{figure}
\centering
  \begin{tikzpicture}[every loop/.style={},scale=1]
  \node[blackvertex,label={180:+1}] (b1) at (0,1) {};
  \path[thick,blue] (b1)   edge[out=60,in=120,loop, min distance=10mm] node  {} (b1);

  \node[blackvertex,label={180:+2}] (b2) at (1,2) {};
  \path[thick,blue] (b2)   edge[out=60,in=120,loop, min distance=10mm] node  {} (b2);

  \node[blackvertex, label={0:-1}] (r1) at (2,1) {};
  \path[thick,red,dashed] (r1)   edge[out=60,in=120,loop, min distance=10mm] node  {} (r1);
  
  \node[blackvertex, label={0:-2}] (r2) at (1,0) {}; 
  \path[thick,red,dashed] (r2)   edge[out=240,in=300,loop, min distance=10mm] node  {} (r2);
  
  \draw[thick,red,dashed] (r2)--(r1)--(b2);
  \draw[thick,blue] (b2)--(b1)--(r2);
  \node at (1,-1.5) {$D_4$};
  
  \begin{scope}[xshift=4cm]
  \node[blackvertex,label={180:+1}] (b1) at (0,1) {};
  \path[thick,blue] (b1)   edge[out=60,in=120,loop, min distance=10mm] node  {} (b1);

  \node[blackvertex,label={180:+2}] (b2) at (1,2) {};
  \path[thick,blue] (b2)   edge[out=60,in=120,loop, min distance=10mm] node  {} (b2);

  \node[blackvertex, label={0:-1}] (r1) at (2,1) {};
  \path[thick,red,dashed] (r1)   edge[out=60,in=120,loop, min distance=10mm] node  {} (r1);
  
  \node[blackvertex, label={0:-2}] (r2) at (1,0) {}; 
  \path[thick,red,dashed] (r2)   edge[out=240,in=300,loop, min distance=10mm] node  {} (r2);
  
  \node[blackvertex, label={30:0}] (z) at (1,1) {}; 
  
  \draw[thick,red,dashed] (r2)--(r1)--(b2) (r2)--(z)--(r1);
  \draw[thick,blue] (b2)--(b1)--(r2) (b2)--(z)--(b1);
  \node at (1,-1.5) {$D_5$};
  \end{scope}
  
  \begin{scope}[xshift=8cm]
  \node[blackvertex,label={180:+1}] (b1) at (0,1) {};
  \path[thick,blue] (b1)   edge[out=60,in=120,loop, min distance=10mm] node  {} (b1);

  \node[blackvertex,label={180:+2}] (b2) at (1,2) {};
  \path[thick,blue] (b2)   edge[out=60,in=120,loop, min distance=10mm] node  {} (b2);

  \node[blackvertex,label={240:+3}] (b3) at (0,0.2) {};
  \path[thick,blue] (b3)   edge[out=120,in=180,loop, min distance=10mm] node  {} (b3);

  \node[blackvertex, label={0:-1}] (r1) at (2,1) {};
  \path[thick,red,dashed] (r1)   edge[out=30,in=90,loop, min distance=10mm] node  {} (r1);
  
  \node[blackvertex, label={0:-2}] (r2) at (1,0) {}; 
  \path[thick,red,dashed] (r2)   edge[out=240,in=300,loop, min distance=10mm] node  {} (r2);
  
  \node[blackvertex, label={0:-3}] (r3) at (1.8,1.9) {};
  \path[thick,red,dashed] (r3)   edge[out=60,in=120,loop, min distance=10mm] node  {} (r3);
  
  \draw[thick,red,dashed] (r2)--(r1)--(b2) (r2)--(r3)--(r1) (r2)--(b3)--(r1);
  \draw[thick,blue] (b2)--(b1)--(r2) (b2)--(b3)--(b1) (b2)--(r3)--(b1);
  \node at (1,-1.5) {$D_6$};
  \end{scope}
\end{tikzpicture}
\caption{The targets $D_4, D_5, D_6$}\label{fig:Ds}
\end{figure}

Alternatively, one can describe the \ec graphs $D_k$ with a recursive construction.
The \ec graph $D_1$ is a single vertex: $V(D_1) = \{ 0 \}$.
The \ec graph $D_2$ has two vertices: $V(D_2) = \{ -1, 1 \}$.  There is a blue loop 
on $1$ and a red loop on $-1$.  For $k \geq 3$, the \ec graph $D_{k}$ is obtained from 
$D_{k-2}$ by adding two vertices $\{ -j, j \}$ where $j = \lfloor k/2 \rfloor$.  
There is a blue loop on $j$ and red loop on $-j$.  
For a vertex $v \in V(D_{k-2})$ there is a blue edge to each 
of $j$ and $-j$ if $v > 0$; a red edge to each if $v < 0$; and in the case $k$ is
odd there is a blue edge from $j$ to $0$ and a red edge from $-j$ to $0$.

The following proposition is immediate from the definitions.

\begin{prop}\label{prop:subgraphs}
For $k \geq 1$, $D_{2k-1} \subseteq D_{2k-1}^{c_0} \subseteq D_{2k} \subseteq D_{2k+1}$ 
and $F_k^{c_1} \subseteq F_{k+1}^{c_2}$
where $c_0, c_1, c_2 \in \{ B, R \}$.
\end{prop}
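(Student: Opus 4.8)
The plan is to check each containment directly from the definitions; the statement is essentially bookkeeping, but some care is needed since the vertex sets are only nested up to relabelling --- the vertex $0$ appears only at odd indices --- so ``$G \subseteq H$'' should be read here as ``$G$ is isomorphic to a sub-\ec graph of $H$''. I would organize the argument around the recursive description of the $D_k$'s, and around the fact that an alternating path is determined up to isomorphism by its length together with the colour of an end-edge.

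For the chain $D_{2k-1} \subseteq D_{2k-1}^{c_0} \subseteq D_{2k} \subseteq D_{2k+1}$, the first inclusion is immediate: $D_{2k-1}$ and $D_{2k-1}^{c_0}$ have the same vertices and the same edges apart from a single loop at $0$. For the second, the key observation is that the vertex $0$ equipped with a blue (resp.\ red) loop in $D_{2k-1}^{B}$ (resp.\ $D_{2k-1}^{R}$) has precisely the incidence pattern of the extremal vertex $k$ (resp.\ $-k$) of $D_{2k}$: a loop of the same colour, a blue edge to every vertex $i$ with $0 < i < k$, and a red edge to every vertex $i$ with $-k < i < 0$. Since moreover $D_{2k-1}$ and $D_{2k}$ both induce the \ec graph $D_{2k-2}$ on $\{\pm 1, \dots, \pm(k-1)\}$, the bijection fixing $\pm 1, \dots, \pm(k-1)$ and sending $0 \mapsto k$ (resp.\ $0 \mapsto -k$) identifies $D_{2k-1}^{c_0}$ with the sub-\ec graph of $D_{2k}$ obtained by deleting the vertex $-k$ (resp.\ $k$). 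For the last inclusion it suffices to note that deleting $0$ from $D_{2k+1}$ yields exactly $D_{2k}$, since on the nonzero vertices the rule ``$rs$ is blue iff $r > 0$'' and the placement of loops coincide in the two definitions. Each of these checks stays valid in the degenerate cases $k = 1, 2$, with the empty ranges interpreted in the obvious way.

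For the path containments $F_k^{c_1} \subseteq F_{k+1}^{c_2}$, I would read off the edge colours of $F_{k+1}^{c_2}$ in order along $v_0 v_1, \dots, v_k v_{k+1}$ as an alternating two-colour sequence. Deleting the endpoint $v_{k+1}$ leaves an alternating path of length $k$ whose first edge has one colour, and deleting $v_0$ leaves an alternating path of length $k$ whose first edge has the other colour; between them these two sub-\ec graphs realize, up to isomorphism, every alternating path of length $k$ --- that is, both $F_k^{B}$ and $F_k^{R}$ when $k$ is odd, and $F_k$ when $k$ is even. Hence $F_k^{c_1}$ embeds in $F_{k+1}^{c_2}$ for all $c_1, c_2 \in \{B, R\}$.

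The one genuine (if mild) obstacle is keeping the non-literal identifications --- vertex $0$ versus the extremal vertices $\pm k$ of $D_{2k}$, and the choice of which endpoint of $F_{k+1}^{c_2}$ to delete --- consistent with the blue/red conventions, so that each ``$\subseteq$'' holds on the nose rather than merely up to a colour swap. Once the incidence pattern of $0$ is lined up with that of $\pm k$, everything else is routine comparison against the two definitions of $D_k$ and the definition of the alternating paths $F_k^{c}$.
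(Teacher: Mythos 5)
Your verification is correct and follows the same route the paper intends when it calls the proposition ``immediate from the definitions'': a direct comparison of vertex sets, loops, and the edge-colour rule, with the only non-literal identification being $0 \mapsto k$ (resp.\ $0 \mapsto -k$) to embed $D_{2k-1}^{B}$ (resp.\ $D_{2k-1}^{R}$) into $D_{2k}$, and endpoint deletion for the paths. Your explicit check that the incidence pattern of $0$ matches that of $\pm k$, and that the two endpoint deletions of $F_{k+1}^{c_2}$ realize both isomorphism types of length-$k$ alternating paths, is exactly the bookkeeping the paper leaves to the reader.
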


Since $G \subseteq H$ corresponds precisely to an embedding homomorphism $G \to H$, 
the proposition implies $D \to D'$ whenever $D \in \mathcal{D}_k$ and $D' \in \mathcal{D}_{k'}$ 
with $k < k'$. Also, $F_{k}^{c_1} \to F_{k'}^{c_2}$ for $k < k'$ and $c_1, c_2 \in \{ B, R \}$.
The homomorphism partial order on $\mathcal{D}$ under $\to$ is shown in Figure~\ref{fig:Poset},
which also includes the maps $D_k \to D^{c}_{k}$.

\begin{figure}
\begin{center}
\begin{tikzpicture}[thick,scale=0.5, every node/.style={scale=1}]

\node (v1) at (0,0) {$D_1$};
\node (v1b) at (-2,2) {$D_1^B$};
\node (v1r) at (2,2) {$D_1^R$};
\node (v2) at (0,4) {$D_2$};
\node (v3) at (0,6) {$D_3$};
\node (v3b) at (-2,8) {$D_3^B$};
\node (v3r) at (2,8) {$D_3^R$};
\node (v4) at (0,10) {$D_4$};
\node (v5) at (0,12) {$D_5$};

\draw[thick,arrows=->] (v1)--(v1b);
\draw[thick,arrows=->] (v1)--(v1r); 
\draw[thick,arrows=->] (v1b)--(v2);
\draw[thick,arrows=->] (v1r)--(v2);
\draw[thick,arrows=->] (v2)--(v3);
\draw[thick,arrows=->] (v3)--(v3r); 
\draw[thick,arrows=->] (v3)--(v3b);
\draw[thick,arrows=->] (v3b)--(v4);
\draw[thick,arrows=->] (v3r)--(v4); 
\draw[thick,arrows=->] (v4)--(v5);

\node (dots) at (0,13) {$\vdots$};

\begin{scope}[xshift=10cm]
\node (f1) at (0,0) {$\{ F_1^B, F_1^R \}$};
\node (f1b) at (-2,2) {$F_1^R$};
\node (f1r) at (2,2) {$F_1^B$};
\node (f2) at (0,4) {$F_2$};
\node (f3) at (0,6) {$\{ F_3^B, F_3^R \}$};
\node (f3b) at (-2,8) {$F_3^R$};
\node (f3r) at (2,8) {$F_3^B$};
\node (f4) at (0,10) {$F_4$};
\node (v5) at (0,12) {$\{ F_5^B, F_5^R \}$};

\node (dots) at (0,13) {$\vdots$};

\end{scope}

\end{tikzpicture}
\end{center}
\caption{Homomorphism order for $\mathcal{D}_{k}$ and 
their duals.  Each dual admits a homomorphism to a dual 
with a larger subscript.}\label{fig:Poset}
\end{figure}
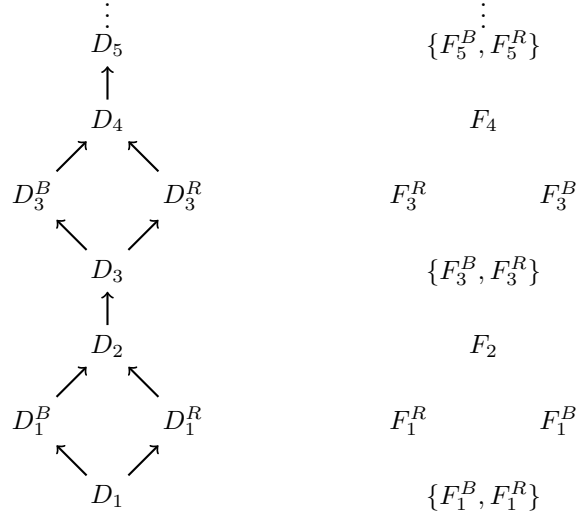

Our main results are the following duality theorems. The first covers 
duality pairs of the form $(F,D(F))$.
The second gives finite duality theorems for $(\mathcal{F}_k,D_k)$, 
specifically for odd $k$ there is a family of two obstructions.

\begin{theorem}\label{thm:main1}
Let $G$ be a \ec graph and $k \geq 1$ be an integer.  Then the following hold:
\begin{enumerate}[label=(\roman*)]
  \item $G \to D_{2k}$ if and only if $F_{2k} \not\to G$, 
  \item $G \to D^B_{2k-1}$ if and only if $F^R_{2k-1} \not\to G$, and
  \item $G \to D^R_{2k-1}$ if and only if $F^B_{2k-1} \not\to G$.
\end{enumerate}    
\end{theorem}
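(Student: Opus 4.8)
The plan is to prove each of (i)--(iii) by establishing the two implications separately, doing (i) in detail and deducing (ii) from (i) and (iii) from (ii) by a colour-swapping symmetry. Throughout I will use the elementary reformulation that a homomorphism from an alternating path into a \ec graph $H$ is the same thing as an \emph{alternating walk} in $H$ of the matching length (and, in the odd-length case, with the prescribed colour on its central edge): a homomorphic image of a path is a walk, and the alternation and colour conditions on the edges transfer verbatim; walks may be read in either direction. For a vertex $v$ of a \ec graph write $f_B(v)$ (resp.\ $f_R(v)$) for the length of a longest alternating walk ending at $v$ whose last edge is blue (resp.\ red), and $0$ if no such walk of positive length exists; reversing walks identifies $f_B(v)$ with ``longest alternating walk starting at $v$ whose first edge is blue''. \textbf{Hard direction of (i).} Assume $F_{2k}\not\to G$, i.e.\ $G$ has no alternating walk of length $2k$. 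Two observations: (a) $f_B(v)+f_R(v)\le 2k-1$ for all $v$ (concatenate a longest red-ending walk at $v$ with the reverse of a longest blue-ending walk at $v$, getting an alternating walk of that length); (b) for a blue edge $uv$, $f_B(v)\ge f_R(u)+1$ and $f_B(u)\ge f_R(v)+1$ (extend by the edge $uv$), and symmetrically for red edges. Define $\varphi(v)=\varepsilon(v)\cdot(1+\min(f_B(v),f_R(v)))$, where $\varepsilon(v)=+1$ if $f_B(v)\ge f_R(v)$ and $\varepsilon(v)=-1$ otherwise; by (a) this lands in $\{\pm1,\dots,\pm k\}=V(D_{2k})$. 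Using (b) one checks, for a blue edge $uv$ (red edges are symmetric), that $\varphi(u),\varphi(v)$ are never both negative, that if $|\varphi(u)|\neq|\varphi(v)|$ the one of smaller absolute value is positive, and that if $|\varphi(u)|=|\varphi(v)|$ then $\varphi(u)=\varphi(v)$ is a positive vertex; in each case $\varphi(u)\varphi(v)$ is a blue edge (possibly a blue loop) of $D_{2k}$, so $\varphi\colon G\to D_{2k}$.

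\textbf{Easy direction of (i).} It suffices to prove $F_{2k}\not\to D_{2k}$, since $G\to D_{2k}$ and $F_{2k}\to G$ would compose. By construction the colour of any edge of $D_{2k}$ is the sign of its endpoint of smallest absolute value (a loop being coloured by the sign of its vertex). Hence in an alternating walk $w_0w_1\cdots w_m$ in $D_{2k}$ no interior vertex $w_i$ can satisfy $|w_{i-1}|\ge|w_i|\le|w_{i+1}|$: otherwise the two edges at $w_i$ would both have colour $\mathrm{sign}(w_i)$, violating alternation. Thus the integer sequence $(|w_i|)$, with values in $\{1,\dots,k\}$, strictly increases, then (after at most one repeated value) strictly decreases, so $m\le(k-1)+1+(k-1)=2k-1<2k$.

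\textbf{Parts (ii) and (iii).} By Proposition~\ref{prop:subgraphs} there is an embedding $D^B_{2k-1}\hookrightarrow D_{2k}$; checking definitions, it sends $0\mapsto +k$ (identity otherwise) and is an \emph{isomorphism} onto the subgraph of $D_{2k}$ induced on $\{+k\}\cup\{\pm1,\dots,\pm(k-1)\}$. For the hard direction of (ii), assume $F^R_{2k-1}\not\to G$: then $G$ has no alternating walk of length $2k$ (a length-$2k$ alternating walk contains a length-$(2k-1)$ sub-walk with red central edge, i.e.\ $F^R_{2k-1}\to G$), so $\varphi\colon G\to D_{2k}$ is defined, and I claim $\varphi$ never equals $-k$: that would force $f_B(v)=k-1$ and $f_R(v)\ge k$, and then concatenating a length-$k$ red-ending walk at $v$ with the reverse of a length-$(k-1)$ blue-ending walk at $v$ gives an alternating walk of length $2k-1$ with red central edge, contradicting the hypothesis; hence $\varphi(V(G))\subseteq\{+k\}\cup\{\pm1,\dots,\pm(k-1)\}$ and, post-composing with the inverse of the isomorphism above, $G\to D^B_{2k-1}$. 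For the easy direction of (ii): a homomorphism $F^R_{2k-1}\to D^B_{2k-1}$ would send the central edge to a red edge $xy$ of $D^B_{2k-1}$ with $f_B(x)\ge k-1$ and $f_B(y)\ge k-1$ (these computed in $D^B_{2k-1}$); but every red edge of $D^B_{2k-1}$ has an endpoint $c$ of smaller absolute value with $c\le 0$, and if $c=0$ its partner is some $-j$ with $1\le j\le k-1$, while $f_B(-j)\le j-1\le k-2$ for every negative vertex $-j$ (an alternating walk starting at $-j$ with a blue edge passes to a positive vertex of smaller absolute value, then by a red edge to a negative vertex of still smaller absolute value, and so on, never using a loop nor meeting $0$, so its absolute values strictly decrease), a contradiction; composing then gives ``$G\to D^B_{2k-1}\Rightarrow F^R_{2k-1}\not\to G$''. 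Finally, (iii) follows from (ii) by the operation that exchanges the two colours and negates every vertex label, which swaps $F^B_{2k-1}\leftrightarrow F^R_{2k-1}$ and $D^B_{2k-1}\leftrightarrow D^R_{2k-1}$ and preserves homomorphisms.

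\textbf{Where the difficulty lies.} The main obstacle is the homomorphism in the hard direction of (i): identifying the right invariant $(f_B,f_R)$, with $|\varphi(v)|=1+\min(f_B(v),f_R(v))$ and $\varepsilon(v)=\mathrm{sign}(f_B(v)-f_R(v))$, and pushing the edge-by-edge verification through its sub-cases. A secondary subtlety is pinning down the behaviour of the special vertex $0$ in the odd case, which is precisely what makes the reduction to (i) work via $0\mapsto +k$.
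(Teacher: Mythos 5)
Your proof is correct, but it takes a genuinely different route from the paper. For the ``hard'' directions the paper runs its peeling algorithm (repeatedly mapping blue-only/red-only vertices to $\pm i$), establishes its correctness via Proposition~\ref{prop:Invariant}, and then bounds the terminating level using the certificates of minimality from Build($f$) together with the ordering of Proposition~\ref{prop:subgraphs}; you instead give a static, formula-based homomorphism $\varphi(v)=\varepsilon(v)(1+\min(f_B(v),f_R(v)))$ built from longest-alternating-walk parameters, which is the exact \ec analogue of the classical height-function proof that a digraph with no $\vec{P}_{n+1}$ maps to $T_n$, and you handle the odd case by showing the image avoids $-k$ and identifying the induced subgraph of $D_{2k}$ on $\{+k\}\cup\{\pm1,\dots,\pm(k-1)\}$ with $D^B_{2k-1}$. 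For the ``easy'' directions the paper uses the inductive Lemma~\ref{lem:FnotMap} (peeling the path ends and the vertices $\pm1$ of $D_k$), whereas you argue directly: a unimodality argument on $|w_i|$ along an alternating walk in $D_{2k}$, and an $f_B$-bound on negative vertices of $D^B_{2k-1}$, with (iii) obtained from (ii) by the colour-swap/negation symmetry (the paper proves (ii) and (iii) by the same induction). Your edge-by-edge verification of $\varphi$, the exclusion of $-k$, and the decreasing-absolute-value argument all check out (only the degenerate case $k=1$ of the easy direction of (ii) is not literally covered by your wording, but it is trivial since $D^B_1$ has no red edge). What each approach buys: your argument is shorter, self-contained, and purely structural, proving Theorem~\ref{thm:main1} without any algorithmic machinery; the paper's approach is longer but yields, from the same analysis, the linear-time algorithm, the certificates of minimality, Theorem~\ref{thm:main2}, and Corollary~\ref{cor:homequivD}, which your route would have to address separately (though your potential function is essentially what the algorithm computes iteratively, so the two are close in spirit).
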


\begin{theorem}\label{thm:main2}
Let $G$ be an \ec graph and $k \geq 1$ be an integer.  
Then $G \to D_k$ if and only if for all $F \in \mathcal{F}_k, F \not\to G$.
\end{theorem}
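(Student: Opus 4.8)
\emph{The even case.} When $k$ is even we have $F^R_k=F^B_k=F_k$, so $\mathcal F_k=\{F_k\}$ and the condition ``$F\not\to G$ for all $F\in\mathcal F_k$'' is just ``$F_k\not\to G$''. Thus the even case of Theorem~\ref{thm:main2} is exactly Theorem~\ref{thm:main1}(i), and there is nothing to prove; assume from now on that $k$ is odd and put $j=\lfloor k/2\rfloor$. The plan is to establish
$$
G\to D_k \quad\Longleftrightarrow\quad G\to D_k^B \ \mbox{ and }\ G\to D_k^R ,
$$
for then Theorem~\ref{thm:main2} follows from Theorem~\ref{thm:main1}(ii),(iii): the right-hand side is equivalent to $F_k^R\not\to G$ and $F_k^B\not\to G$, i.e.\ to $F\not\to G$ for every $F\in\mathcal F_k$. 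The forward implication is immediate from Proposition~\ref{prop:subgraphs}, since $D_k\subseteq D_k^B$ and $D_k\subseteq D_k^R$ give embeddings $D_k\to D_k^B$ and $D_k\to D_k^R$. For the converse, given $\varphi^B\colon G\to D_k^B$ and $\varphi^R\colon G\to D_k^R$, the map $v\mapsto(\varphi^B(v),\varphi^R(v))$ is a homomorphism of $G$ into the categorical product $D_k^B\times D_k^R$ (the \ec graph on $V(D_k^B)\times V(D_k^R)$ in which $(u_1,u_2)(v_1,v_2)\in E_i$ exactly when $u_1v_1\in E_i(D_k^B)$ and $u_2v_2\in E_i(D_k^R)$). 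So it is enough to produce one homomorphism $\psi\colon D_k^B\times D_k^R\to D_k$; equivalently, since $v\mapsto(v,v)$ embeds $D_k$ in the product, to show $D_k^B\times D_k^R\sim D_k$.

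Next, the structure of the product. Each of $D_k^B$ and $D_k^R$ is $D_k$ with one extra loop at the vertex $0$, blue for $D_k^B$ and red for $D_k^R$; hence $a$ carries a blue loop in $D_k^B$ iff $a\ge0$, $b$ carries a red loop in $D_k^R$ iff $b\le0$, and otherwise both have the loop structure of $D_k$. Consequently a vertex $(a,b)$ of $D_k^B\times D_k^R$ has a blue loop exactly when $a\ge0$ and $b\ge1$, has a red loop exactly when $a\le-1$ and $b\le0$, and is loopless otherwise. The looped vertices of $D_k$ are precisely its nonzero vertices, so every homomorphism $\psi$ must send blue-looped product vertices into $\{1,\dots,j\}$ and red-looped ones into $\{-1,\dots,-j\}$; moreover any two blue-looped product vertices are joined by a blue edge and each carries a blue loop, so they span the same kind of \ec graph that $\{1,\dots,j\}$ spans in $D_k$ (and symmetrically for red), meaning the forced part of $\psi$ is at least self-consistent.

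It remains to choose $\psi$ on the loopless vertices. I would make $\psi$ a retraction, so $\psi(v,v)=v$, and keep the assignments forced above. A natural first attempt takes the sign of $\psi(a,b)$ from $a$ (reverting to the sign of $b$ when $a=0$) and takes $|\psi(a,b)|$ to be the smallest nonzero value among $|a|,|b|$. This is already correct on many vertices but fails on the loopless \emph{mixed-sign} vertices, namely those with $a\le-1,\ b\ge1$ or with $a\ge0,\ b\le0$: in the product such a vertex can be blue-adjacent to vertices lying on one side of $0$ in $D_k$ and red-adjacent to vertices lying on the other side, so its image has to be chosen so as to obstruct no edge of $D_k$. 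Getting $\psi$ right on these vertices is the crux of the whole argument.

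With $\psi$ in hand, checking it is a homomorphism is a case analysis. For a blue edge $\{(a,b),(c,d)\}$ of $D_k^B\times D_k^R$ one knows $\{a,c\}$ is blue in $D_k^B$ and $\{b,d\}$ is blue in $D_k$, and one must deduce that $\{\psi(a,b),\psi(c,d)\}$ is a blue edge of $D_k$ --- equivalently, that of these two vertices the one with the smaller absolute value is $\ge1$, or is $0$ with the other $\ge1$. Splitting on the relative sizes and signs of $|a|,|b|,|c|,|d|$ makes this routine except when a loopless mixed-sign vertex is involved, which is exactly where the definition of $\psi$ must have been chosen carefully; the red edges are handled symmetrically, using the colour-reversing automorphism $v\mapsto-v$ of $D_k$ (which interchanges $D_k^B$ and $D_k^R$). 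The main obstacle I expect is thus a single one: correctly placing the loopless mixed-sign vertices of $D_k^B\times D_k^R$ in $D_k$.
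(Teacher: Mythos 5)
Your overall strategy is sound and genuinely different from the paper's: you reduce the odd case to the equivalence $D_k \sim D_k^B \times D_k^R$ and then invoke Theorem~\ref{thm:main1}(ii),(iii), whereas the paper runs the Duality Algorithm on $G$ and reads the conclusion off the certificate-of-minimality analysis in Step~\ref{postProc}. The reduction itself is correct and not circular, since Theorem~\ref{thm:main1} is proved independently of Theorem~\ref{thm:main2}; but note you cannot lean on Corollary~\ref{cor:homequivD} as stated in the paper, because there it is \emph{deduced from} Theorem~\ref{thm:main2} (via the certificates), so in your route the homomorphism $\psi\colon D_k^B \times D_k^R \to D_k$ must be constructed from scratch.

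That construction is exactly what is missing, and you say so yourself: $\psi$ is never defined on the loopless mixed-sign vertices (those $(a,b)$ with $a\le -1,\ b\ge 1$ or $a\ge 0,\ b\le 0$), which you correctly identify as ``the crux of the whole argument.'' This is not a routine omission. Your forced assignments only cover the looped vertices, and the mixed-sign vertices genuinely require care: for instance $(2,-1)$ carries only red edges, all of them to vertices of the form $(-1,d)$, and some of these neighbours are themselves loopless mixed-sign vertices, so their images must be chosen jointly so that every such red edge lands on a red edge of $D_k$ (a ``smallest absolute value with sign taken from $a$'' rule does not do this). Until $\psi$ is written down on these vertices and the colour-by-colour verification is carried out, the converse implication of the odd case --- the heart of Theorem~\ref{thm:main2} --- is unproved. (Some vertices, such as $(1,-1)$, turn out to be isolated in the product and are harmless, but that observation does not dispose of the whole class.) As it stands, the proposal proves only the even case and the easy direction of the odd case, and reduces the rest to an explicitly unresolved construction; either supply $\psi$ in full, or fall back on the paper's algorithmic argument, which bypasses the product entirely.
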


If $G \to D$ for some $D \in \mathcal{D}$, then there is a (unique) minimal element
$D' \in \mathcal{D}$ such that $G \to D'$.  (This follows from our work below, but
Figure~\ref{fig:Poset} strongly suggests the result.) A consequence of the theorems above is
that we can certify the minimality using homomorphisms of the form $F \to G$ for 
$F \in \mathcal{F}$.  That is, we certify that $G \not\to D''$ for any predecessor of $D'$.
Formally,
\begin{defn}
Suppose $G \to D$ for some $D \in \mathcal{D}_k$.  A \emph{certificate of minimality} is:
\begin{itemize}
  \item a homomorphism $f: F_{k-1} \to G$ when $D = D_{k}$ and $k$ is odd;
  \item a homomorphism $f: F_{k}^R \to G$  when $D = D_{k}^R$ and $k$ is odd;
  \item a homomorphism $f: F_{k}^B \to G$  when $D = D_{k}^B$ and $k$ is odd; or
  \item a pair of homomorphisms $f_R: F_{k-1}^R \to G$ and $f_B: F_{k-1}^B \to G$
  when $D = D_{k}$ and $k$ is even. 
\end{itemize}
\end{defn}
In the following section we give a linear time algorithm that takes as input
a \ec graph $G$, determines the minimum $k$ for which $G \to D$ for some $D \in \mathcal{D}_k$
(if it exists), and produces a certificate of minimality.  (In an abuse of notation
to simplify the presentation of the algorithm, 
we write it returns $f: F \to G$ as the certificate of minimality.
In the last case listed above, the certificate is technically 2 maps, but could
be equivalently encoded as a single map $f: \{ F_{k-1}^R \cup F_{k-1}^B \} \to G$.)

The proof of both theorems is accomplished as follows.  The following lemma
proves one direction for each of the duality claims.  The converse will follow
from our algorithm.

\begin{lemma}\label{lem:FnotMap}
Let $k \geq 1$ and $F \in \mathcal{F}_k$.  Then $F \not\to D_k$.  Further if
$k$ odd, then $F^B_k \not\to D^R_k$ and $F^R_k \not\to D^B_k$.
\end{lemma}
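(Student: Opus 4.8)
The plan is to prove the non-existence of these homomorphisms by a parity/length argument that tracks how an alternating path must traverse $D_k$. The key structural fact about $D_k$ I would exploit is that the non-loop edges go strictly "toward smaller absolute value": every edge $rs$ with $|r|<|s|$ joins a vertex to one of strictly larger absolute value, and the loops sit exactly on the vertices $\pm 1,\dots,\pm j$ (plus possibly $0$ in the $D^B_k, D^R_k$ case). So I would introduce the potential function $\mu(v) = |v|$ on $V(D_k)$ (with $\mu(0)=0$), and observe that along any edge of $D_k$ the value of $\mu$ either stays the same (only possible on a loop, or on the edges incident to $0$ when... actually only on loops) or changes — and crucially it is bounded: $0 \le \mu(v) \le j = \lfloor k/2\rfloor$.

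The main step: suppose for contradiction $\varphi\colon F \to D_k$ is a homomorphism, where $F$ is an alternating path $v_0 v_1 \cdots v_k$ of length $k$. I would argue that $\varphi$ cannot use a loop of $D_k$ for two consecutive edges of $F$ that have different colours — because the loops are monochromatic (a blue loop only at a positive vertex, red only at a negative vertex, and those vertex sets are disjoint), while consecutive edges of an alternating path have different colours, and they share the vertex $v_i$; so if $\varphi(v_{i-1}v_i)$ is a blue loop then $\varphi(v_i) > 0$ and $\varphi(v_i v_{i+1})$ blue cannot be a loop at a positive vertex unless... wait, it could be the loop at $\varphi(v_i)$ again, but that loop is blue, not red. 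Hence at most one loop-traversal is "free" at any given time, and in fact consecutive edges of $F$ cannot both be mapped to loops. That forces $\varphi$ to use at least $\lceil k/2 \rceil$ non-loop edges, each of which changes $\mu$ by a nonzero integer; combined with the bound $\mu \in \{0,1,\dots,j\}$ and a careful accounting of when $\mu$ can increase versus decrease (the colour of the edge $rs$ is determined by the sign of the endpoint of smaller absolute value, which pins down the "direction" relative to $\mu$), I would derive that the walk in $D_k$ realizing $\varphi$ is too short to have length $k$ — a contradiction. This is essentially the edge-coloured analogue of the observation that the longest directed path in $T_n$ has $n$ vertices.

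For the refinement when $k$ is odd: here $F^B_k$ has its central edge $v_{j}v_{j+1}$ blue, and $D^R_k$ has a red loop added at $0$ but no blue loop there. The argument is the same potential argument, but now I would track the position where the walk $\varphi(v_0)\varphi(v_1)\cdots$ attains $\mu = 0$: a walk of length $k$ (odd) realizing $F^B_k$ in $D_k^R$ would, by the length count above, have to pass through $0$ and moreover use the loop at $0$ at the moment it is there — but the loop at $0$ in $D^R_k$ is red, whereas the relevant central edge of $F^B_k$ is blue, so that step is illegal. Hence $F^B_k \not\to D^R_k$, and symmetrically $F^R_k \not\to D^B_k$.

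The step I expect to be the main obstacle is making the "cannot use two loops in a row, hence the walk is too short" bookkeeping fully rigorous: I need to show not merely that $\varphi$ uses many non-loop edges, but that the non-loop edges, together with the sign constraint governing their colours, cannot be arranged to form a closed-enough walk of the full length $k$. The cleanest way is probably an induction on $k$ using the recursive description $D_k$ built from $D_{k-2}$ by adding $\{-j,j\}$: a homomorphism $F_k \to D_k$ either avoids $\{-j,j\}$ entirely (then it is a homomorphism into $D_{k-2}$, contradicting the inductive hypothesis since $F_{k-2}^{c} \subseteq F_k^{c}$... one must be careful that $F_k \to D_{k-2}$ would give $F_{k-2} \to D_{k-2}$ via Proposition~\ref{prop:subgraphs}) or it uses an endpoint vertex $\pm j$, and since $\pm j$ has only the loop and edges back into $D_{k-2}$, a short case analysis on how the alternating path enters and leaves $\pm j$ peels off one or two edges and reduces to $F_{k-2} \to D_{k-2}$. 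I would set up the base cases $k=1,2$ by hand ($D_1$ has no loop so $F_1 \not\to D_1$; $D_2$'s loops are monochromatic so the length-$2$ alternating path $F_2$ cannot map in) and run this induction, folding the $D^B_k, D^R_k$ refinement into the same recursion.
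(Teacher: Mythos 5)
Your write-up offers two routes and completes neither. The potential-function route is unfinished by your own admission: knowing that consecutive edges of $F$ cannot both map to loops, and that $\mu=|\cdot|$ is bounded by $\lfloor k/2\rfloor$, does not rule out a long walk, since non-loop edges can raise and lower $\mu$ repeatedly; the real obstruction lies in the interplay of colours and signs, which is exactly the ``careful accounting'' you defer. The inductive route is closer, but your chosen decomposition has a concrete gap: you delete the outer vertices $\pm j$ of $D_k$ and claim that if the image of $F_k$ meets $\{j,-j\}$ then those visits can be handled by ``peeling off one or two edges,'' reducing to $F_{k-2}\to D_{k-2}$. That is not justified: for $k\ge 4$ the vertices $\pm j$ are mixed in $D_k$ (besides the loop, $j$ receives blue edges from the positive vertices and red edges from the negative ones), so any interior vertex of $F_k$ may map to $\pm j$, possibly several times and at arbitrary positions along the path. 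Trimming edges at the ends does not eliminate such visits, and you cannot simply re-map them into $D_{k-2}$ either, since $D_k$ does not retract onto its copy of $D_{k-2}$ (for instance $F_2\to D_4$ while $F_2\not\to D_2$). Your odd-$k$ refinement, which leans on the unfinished length count to force a passage through $0$, inherits the same problem. The base cases $k=1,2$ are fine.

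The paper's induction deletes the opposite pair of vertices and thereby avoids all case analysis: remove $\pm 1$, which are precisely the blue-only and red-only vertices of $D_k$. Every interior vertex of an alternating path is mixed and hence must map to a mixed vertex of $D_k$; the mixed vertices induce a copy of $D_{k-2}$, so the interior subpath of $F_k$, which lies in $\mathcal{F}_{k-2}$, would map to $D_{k-2}$, contradicting the inductive hypothesis. The same argument gives the refinement for odd $k$, because the interior subpath of $F^B_k$ is $F^B_{k-2}$ and the subgraph of $D^R_k$ induced by its mixed vertices is a copy of $D^R_{k-2}$. If you want to salvage your outline, replace ``delete $\pm j$'' by ``delete $\pm 1$'' and replace the enter/leave analysis by the one-line observation that mixed vertices must map to mixed vertices.
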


\begin{proof}
We proceed by induction on $k$.  For $k = 1$ and $k=2$ the results are trivial.
Let $k > 2$ and assume to the contrary that there exists a homomorphism $f: F_k \to D_k$.  
Let $F'$ be the subpath of $F$ induced by $\{ v_1, \dots, v_{k-1} \}$.
Note that each vertex of $F'$ is mixed in $F$.  Thus under $f$ each vertex of $F'$
must map to a mixed vertex of $D_k$.  Let $D'$ be the subgraph of $D_k$ induced by the
mixed vertices, specifically $D' = D_k \backslash \{ \pm 1 \}$.
However, $F' \to D'$ contradicts the inductive hypothesis as 
$F' \in \mathcal{F}_{k-2}$ and $D'$ is isomorphic to $D_{k-2}$.  

A similar argument shows for $k$ odd, $F^B_k \not\to D^R_k$ and 
$F^R_k \not\to D^B_k$.
\end{proof}

A digraph containing a directed cycle does not admit a homomorphism to
$T_n$ for any $n$.  Analogously, a \ec graph $G$ containing a closed, alternating walk 
has the property that $F \to G$ for all $F \in \mathcal{F}$. Consequently $G$
does not admit a homomorphism to $D_k$ for any $k$.  Thus a closed alternating walk 
in $G$ certifies $G \not\to D_k$ for any $k$.  A \ec graph $G$ is 
\emph{smooth} if each vertex is mixed.

\begin{lemma}\label{lem:closedWalk}
Let $G$ be a smooth \ec graph.  Then $G$ contains a closed alternating walk
and $F_k \to G$ for all $k \geq 1$.  Consequently, $G \not\to D_k$ for any $k$.
\end{lemma}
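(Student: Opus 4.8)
The plan is to build the closed alternating walk greedily and then exploit its periodicity to realise every member of $\mathcal{F}$ inside $G$; the non-homomorphism statement then falls out by composing with Lemma~\ref{lem:FnotMap}.

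First I would construct an infinite alternating walk in $G$. Start at any vertex $v_0$; since $G$ is smooth, $v_0$ is mixed, so it has an incident edge, which I follow (of either colour) to $v_1$. Inductively, having reached $v_t$ along an edge of colour $c$, smoothness guarantees $v_t$ is incident with an edge of colour $\bar c$, which I follow to $v_{t+1}$. This yields an infinite walk $v_0, v_1, v_2, \dots$ whose successive edges alternate in colour. As $V(G)$ is finite, among the first $2|V(G)|+1$ terms two must agree in the pair (vertex, parity of index), so there are indices $i < j$ with $v_i = v_j$ and $j-i$ even. Then $W := v_i v_{i+1}\cdots v_j$ is a closed walk; it is alternating, being a subwalk of an alternating walk, and since $j-i$ is even its first edge $v_iv_{i+1}$ and last edge $v_{j-1}v_j$ receive different colours, so $W$ can be traversed repeatedly to produce a periodic infinite alternating walk. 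This $W$ is the required closed alternating walk.

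Next I would deduce $F_k \to G$ for every $k \ge 1$. Fix $k$. Traversing the periodic walk generated by $W$ for $k$ steps (and choosing at which vertex of $W$ to begin) gives an alternating walk $w_0 w_1 \cdots w_k$ in $G$; shifting the starting vertex by one flips the colour of $w_0w_1$, and reversing the walk reverses, hence flips, the whole colour sequence. Since an alternating path of length $k$ is determined up to isomorphism by the colour of its first edge (for odd $k$ this distinguishes $F^B_k$ from $F^R_k$; for even $k$ the two are isomorphic), I can match whichever of $F^B_k$, $F^R_k$ I wish, and a length-$k$ alternating walk with the correct first-edge colour is exactly a homomorphic image of the corresponding path. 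Thus every $F \in \mathcal{F}$ satisfies $F \to G$; in particular, for each $k$ every $F \in \mathcal{F}_k$ maps to $G$.

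Finally, for the consequence: if $G \to D_k$ for some $k$, choose $F \in \mathcal{F}_k$; then $F \to G \to D_k$ gives $F \to D_k$, contradicting Lemma~\ref{lem:FnotMap}. Hence $G \not\to D_k$ for any $k$. I do not expect a serious obstacle here, as the construction is elementary; the only points needing a little care are the parity bookkeeping that makes $W$ close up with its two end-edges of different colours (so that it is genuinely repeatable, the analogue of a directed cycle), and the remark that reversing an alternating walk interchanges the two colourings, which is what lets a single walk derived from $W$ cover both $F^B_k$ and $F^R_k$ when $k$ is odd.
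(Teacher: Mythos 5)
Your proposal is correct, and its overall strategy coincides with the paper's: produce a closed alternating walk, wrap the paths of $\mathcal{F}$ around it, and invoke Lemma~\ref{lem:FnotMap} to conclude $G \not\to D_k$. Where you differ is in how the closed alternating walk is obtained. The paper takes a \emph{maximal} alternating path $p_0,\dots,p_t$ and uses maximality plus smoothness to find a red edge from $p_0$ and a continuation from $p_t$ back into the path, then does a parity case split, in one case producing a closed walk that re-traverses part of the path ($p_0,p_i,\dots,p_t,p_j,\dots,p_1,p_0$). You instead grow a one-way infinite alternating walk greedily and apply pigeonhole to the pairs (vertex, parity of index), getting $v_i=v_j$ with $j-i$ even; your parity bookkeeping is exactly right, since $j-i$ even forces the first edge $v_iv_{i+1}$ and last edge $v_{j-1}v_j$ of the closed segment to have different colours, which is what makes the walk repeatable. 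Your route avoids the case analysis and makes the "wrap $F_k$ around the walk" step fully explicit (including getting both $F^B_k$ and $F^R_k$ for odd $k$ by shifting the starting point by one), whereas the paper's maximal-path argument is the one it reuses verbatim as the subroutine for Step~\ref{step:Smooth} of the algorithm; both are elementary and implementable in linear time, so nothing is lost either way.
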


\begin{proof}
Let $G$ be a smooth \ec graph.  Let $P$ be a maximal alternating path in $G$.  
Let the vertices of the path be $p_0, p_1, \dots, p_t$.  Suppose $p_0 p_1$
is blue.  By smoothness, $p_0$ is 
incident with a red edge.  By the maximality of $P$, if $p_0 u$ is red, then
$u = p_i$ for some $i$. If $i$ odd, then $p_0, p_1, \dots, p_i, p_0$ 
is an alternating cycle and we are done.  

Thus suppose $i$ is even. Similarly, $p_t$ has a neighbour
$p_j$ such that $p_{t-1} p_t p_j$ is an alternating path. 
In the case $j$ and $t$ have opposite parity, 
$p_j, p_{j+1}, \dots, p_{t-1}, p_t, p_j$ 
is an alternating cycle.  Otherwise we have
$p_0, p_i, p_{i+1}, \dots, p_t, p_{j}, p_{j-1}, \dots, p_1, p_0$ 
is a closed alternating walk.

The claim $F_k \to G$ for all $k \geq 1$ is trivial as one can simply wrap $F_k$ around
the closed walk.  By Lemma~\ref{lem:FnotMap}, $G \not\to D_k$ for any $k$.
\end{proof}

Clearly the proof of the lemma gives an algorithm for finding a closed alternating
walk in a smooth \ec graph.  (The path $P$ can be constructed greedily starting 
at an arbitrary vertex $v$.)
This is used in Step~\ref{step:Smooth} of our algorithm.

\section{Duality Algorithm}

We now present our algorithm that takes as input a \ec graph $G$, and
returns either a homomorphism $g: G \to D$, $D \in \mathcal{D}$ 
together with a certificate of minimality for $D$, or a smooth subgraph of $G$.  
In the latter case the smooth subgraph certifies $G \not\to D$ for all $D \in \mathcal{D}$.
(See Figure~\ref{fig:dualityalgorithm} for the algorithm pseudocode.)
  
Roughly, the algorithm works by mapping all blue (respectively red) only vertices of 
$G$ to +1 (respectively -1).  The mapped vertices are deleted from
$G$ and the process is repeated mapping the now blue (red) only vertices 
to +2 (-2). Delete the mapped vertices and iterate until the entirety of $G$ is 
mapped to $D_{2i}$ or a smooth subgraph is discovered.
 
The algorithm has a post-processing phase in Step~\ref{postProc}.
This step finds the minimum $D \in \{ D_{2i-1}, D_{2i-1}^B, D_{2i-1}^R, D_{2i} \}$
to which $G$ maps. A small note on the word minimum is required here.  At the termination
of the algorithm in Figure~\ref{fig:dualityalgorithm} 
we know that $G$ maps to some subset of the four targets above.
The only way this subset could not have a well defined minimum (see Figure~\ref{fig:Poset}) 
is if $G \to D_{2i-1}^B$ and $G \to D_{2i-1}^R$ but $G \not\to D_{2i-1}$.  However,
by Corollary~\ref{cor:homequivD}, this is impossible.

The final step of the algorithm is the construction of the certificate of minimality in
a subroutine Build($f$).  We believe the subroutine is more easily described in words
rather than pseudocode.  The paragraph below labelled Build($f$) contains its description.
Note we may assume that $G$ is connected as we can apply the algorithm 
on each component of $G$. 

\begin{figure}
\label{fig:dualityalgorithm}
  \hrulefill
  
  \centerline{\bf Duality Algorithm}
  
\begin{description}
  \item[Input:] A connected \ec graph $G$.
  \item[Output:] Either $g: G \to D$ where $D \in \mathcal{D}$ together  
  with a certificate of minimality $f: F \to G$, 
or a closed alternating walk $W$ in $G$.
\end{description}

\begin{list}{\arabic{enumi}.}{\usecounter{enumi}}
\item \textbf{Set} $i=0$, $G_0 = G$.  \hspace{1cm} (\emph{$G_0$ is the unmapped subgraph})
\item \textbf{While} $(G_0 \neq\emptyset)$ \label{mainLoop}
   \begin{list}{\arabic{enumi}.\arabic{enumii}.}{\usecounter{enumii}}
     \item $i++$
     \item \textbf{Let} $B_i = \{u \in V(G_0)|u \mbox{ is blue only in } G_0 \}$.
     \item \textbf{Let} $R_i = \{u \in V(G_0)|u \mbox{ is red only in } G_0 \}$.
     \item \textbf{Let} $I_i = \{u \in V(G_0)|u \mbox{ is isolated in } G_0 \}$.
     \item\label{step:Smooth} \textbf{If} $B_i \cup R_i \cup I_i = \emptyset$, \textbf{then $G_0$}
     is smooth.  Find a closed alternating walk $W$ in $G_0$.  \textbf{Return} $W$ and 
     \textbf{answer NO}.
     \item \textbf{For each} $u \in B_i \cup I_i:$ \textbf{Set} $g(u) = i$. 
     \item \textbf{For each} $u \in R_i:$ \textbf{Set} $g(u) = -i$.
     \item $G_0=G_0-(B_i \cup R_i \cup I_i)$
   \end{list}
   \textbf{End while}
\item\label{postProc} \textbf{If} $B_i \cup R_i = \emptyset$, \textbf{set} $g(u) = 0$ 
for all $u \in I_i$, \textbf{return} $g: G \to D_{2i-1}$ \\
\textbf{Elseif} $R_i = \emptyset$, \textbf{set} $g(u) = 0$ 
for all $u \in B_i \cup I_i$, \textbf{return} $g: G \to D^B_{2i-1}$ \\
\textbf{Elseif} $B_i = \emptyset$, \textbf{set} $g(u) = 0$ 
for all $u \in R_i \cup I_i$, \textbf{return} $g: G \to D^R_{2i-1}$ \\
\textbf{ Else return} $g: G \to D_{2i}$.
\item\label{callBuild} \textbf{Call} Build(f)
\item \textbf{Return} $g, f$ and \textbf{answer YES}. 
\end{list}
\hrulefill
  
\caption{The algorithm}
\end{figure}

The correctness of the algorithm follows from some straightforward observations.
Using the notation as defined in Algorithm~1 (Figure~\ref{fig:dualityalgorithm}), 
let $G_{i}$ be the subgraph of $G$ induced by 
$\bigcup_{j=1}^i (B_{j} \cup R_{j} \cup I_{j})$.

\begin{prop}\label{prop:Invariant}
After $i$ iterations of the loop (Step~\ref{mainLoop}) the following invariants are 
true.
\begin{enumerate}
  \item The map $g$ defines a homomorphism $G_i \to D_{2i}$.
  \item Each vertex $v \in B_i$ (respectively $v \in R_i$) is
  the terminus of an alternating path of length $i-1$ whose last edge 
  is coloured red (respectively coloured blue), whereas each vertex $v\in I_i$
  is the midpoint of an alternating path of length $2i-2$.
\end{enumerate}
\end{prop}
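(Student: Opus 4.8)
The plan is to prove both invariants simultaneously by induction on $i$. Before starting it is convenient to rename things so that the algorithm's running subgraph does not clash with the $G_i$ of the statement: for $j\ge 1$ write $H_j := G - V(G_{j-1})$ for the \ec graph to which the loop is applied at iteration $j$ (this is what the pseudocode calls $G_0$ at the start of iteration $j$), so that $B_j,R_j,I_j$ are exactly the blue-only, red-only, and isolated vertices of $H_j$. I will repeatedly use the following structural facts, all immediate from the definitions: a vertex $v$ lies in \emph{layer} $j$ (i.e.\ $v\in B_j\cup R_j\cup I_j$) precisely when $v$ survives iterations $1,\dots,j-1$ and is non-mixed in $H_j$; consequently, if $v\in B_j$ with $j\ge 2$ then $v$ is mixed in $H_{j-1}$ and every red edge at $v$ inside $H_{j-1}$ has its other end in layer $j-1$, and since a red edge cannot end at a blue-only or isolated vertex, at least one such edge ends in $R_{j-1}$; the symmetric statement holds for $R_j$, and for $I_j$ one gets one blue edge ending in $B_{j-1}$ and one red edge ending in $R_{j-1}$. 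The base case $i=1$ is immediate: $g$ sends $B_1\cup I_1$ to $+1$ and $R_1$ to $-1$, no edge of $G$ joins a blue-only to a red-only vertex, so $g$ is a homomorphism $G_1\to D_2$; and the paths of length $0$ required by the second invariant are single vertices.

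For the first invariant, assume it holds after $i-1$ iterations, so $g$ is a homomorphism $G_{i-1}\to D_{2i-2}$, hence also $G_{i-1}\to D_{2i}$ by Proposition~\ref{prop:subgraphs}. I then only need to check edges of $G_i$ meeting layer $i$. An edge inside layer $i$ joins two blue-only vertices of $H_i$ (so it is blue and maps to the blue loop at $+i$) or two red-only vertices (red, mapping to the red loop at $-i$), and cannot join $B_i\cup I_i$ to $R_i$. For an edge $uv$ with $u$ in layer $j<i$ and $v$ in layer $i$: both endpoints lie in $H_j$ (as $v\notin V(G_{j-1})$), where $u$ is blue-only, red-only, or isolated; the isolated case is impossible here, so $uv$ is blue iff $u\in B_j$ and red iff $u\in R_j$. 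Since $|g(u)|=j<i=|g(v)|$, the pair $g(u)g(v)$ is an edge of $D_{2i}$ coloured blue iff $g(u)>0$ iff $u\in B_j$, which matches. Hence $g\colon G_i\to D_{2i}$.

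For the second invariant I would build the required path by prepending layer-$i$ vertices to the paths guaranteed for layer $i-1$. If $v\in B_i$ ($i\ge 2$), pick a red edge from $v$ to some $w\in R_{i-1}$; by induction $w$ is the terminus of an alternating path $Q$ of length $i-2$ whose edge at $w$ is blue, so $v$ followed by $Q$ is an alternating path of length $i-1$ whose edge at $v$ is red. The case $v\in R_i$ is symmetric (edge at $v$ blue, via a blue edge into $B_{i-1}$). If $v\in I_i$, pick a blue edge from $v$ to some $x\in B_{i-1}$ and a red edge from $v$ to some $y\in R_{i-1}$; by induction $x$ and $y$ are termini of alternating paths $Q_x$ (edge at $x$ red) and $Q_y$ (edge at $y$ blue) of length $i-2$, and splicing $Q_x$, then $v$, then $Q_y$ produces an alternating walk of length $2(i-2)+2=2i-2$ with $v$ at its centre.

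The main obstacle is that these concatenations must be genuine \emph{paths}, with no repeated vertex. I would handle this by strengthening the inductive hypothesis to record where each witnessing path lives: for $v$ in layer $i$ the path $v=p_0,p_1,\dots,p_{i-1}$ can be chosen so that $p_\ell$ lies in layer $i-\ell$, and its type (blue-only vs.\ red-only in $H_{i-\ell}$) alternates along the path, determined by the type of $v$. Since distinct layers contain distinct vertices, the prepended vertex $v$ — living strictly above $Q$ — cannot occur in $Q$, which settles the $B_i$ and $R_i$ cases. For the $I_i$ case the two branches $Q_x$ and $Q_y$ both run through layers $i-1,i-2,\dots,1$, but with \emph{opposite} type patterns (the $x$-branch starts blue-only, the $y$-branch starts red-only), so within each layer they occupy vertices of different type and are therefore vertex-disjoint, and $v$ again lies above both; hence the spliced walk is a path and $v$ is its midpoint. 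One checks that the construction above maintains this strengthened statement verbatim, and the small-$i$ bookkeeping (for $i=2$ the layer-$1$ paths have length $0$) causes no trouble, completing the induction.
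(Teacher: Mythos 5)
Your proof is correct and follows essentially the same route as the paper: induction over the loop iterations, using the fact that a vertex removed at iteration $j$ that was mixed at iteration $j-1$ must have a neighbour of the opposite type in layer $j-1$, plus the same edge case analysis for the homomorphism into $D_{2i}$. The only difference is your strengthened hypothesis tracking which layer each path vertex lies in to guarantee the concatenations are genuine vertex-disjoint paths, a point the paper's proof leaves implicit; this is a welcome extra bit of care rather than a different approach.
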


\begin{proof}
Invariant~2 is trivial when $i=1$.  Assume $i > 1$.
Observe for $v \in B_i$, $v$ is incident with only blue edges during iteration $i$, 
but was mixed at any previous iteration, in particular at iteration $i-1$.  Hence $v$ is
adjacent to some $u \in R_{i-1}$.  
It follows by induction $v$ is the terminus of an alternating
path of length $i-1$ whose last edge is red.  The case $v \in R_i$ is analogous.  For
a vertex $v \in I_i$, observe that $v$ was mixed at iteration $i-1$, but isolated at
iteration $i$.  Hence, $v$ is adjacent to a vertex in $R_{i-1}$ and $B_{i-1}$.  
Thus Invariant~2 holds.

To see Invariant~1 holds, 
let $uv$ be an edge of $G_i$.  We may assume $u$ is mapped at iteration $i$.
Suppose $g(u)=i$ and $g(v) = j$. (The case $g(u)=-i$ is similar.) Since $u \in B_i \cup I_i$,
if $i = |j|$, then it must be the case that $u, v \in B_i$ and the edge $uv$ 
maps to the loop $ii$.  Otherwise, $|j| < i$.  If $j <0$, then $v \in R_j$; otherwise $v \in B_j$.
In the former case $uv$ is red (as $v$ is red only at iteration $j$).  In the
latter case $uv$ is blue.  The definition of $D_{2i}$ states that $ji$ is 
red for $j < 0$ and blue for $j > 0$.  Thus, $g$ is a homomorphism.
\end{proof}

Observe that the alternating paths described in Invariant 2 can be computed in linear time by
simply storing a pointer from each vertex to a parent whose deletion in the previous
iteration causes the vertex to no longer be mixed.

From Proposition~\ref{prop:Invariant}, 
at the terminus of Step~\ref{mainLoop} we have $g: G \to D_{2i}$.
The post processing at Step~\ref{postProc} is straightforward to analyze.  We complete the
proof of correctness for the algorithm with the subroutine Build(f).

\paragraph{Build(f)} At Step~\ref{callBuild}, 
the algorithm calls a subroutine to build a certificate
of minimality.  Recall the certificate is either a homomorphism
$f: F \to G$ or a pair of homomorphisms $f_1: F_1 \to G$ and $f_2: F_2 \to G$ 
to certify that $G \not\to D'$ for any $D' < D$ where $g: G \to D$ 
is returned by the algorithm. (Here $D' < D$ is the homomorphism order on 
$\mathcal{D}$.  Thus, $D' \to D$ but $D \not\to D'$.)

Consider the post-processing phase of the algorithm at Step~\ref{postProc}.
If $B_i \cup R_i = \emptyset$, then let $u \in I_i$.  By Proposition~\ref{prop:Invariant},
$u$ is the centre vertex of an alternating path of length $2i-2$, i.e. 
there exists $f: F_{2i-2} \to G$ which is the certificate of minimality for
$g: G \to D_{2i-1}$.  Otherwise, consider the next case: Elseif $R_i = \emptyset$.
Let $u \in B_i$.  Then $u$ is incident with blue edges in the final iteration of
the loop.  In particular, there is a vertex $v \in B_i$ such that $uv$ is a blue edge.
Both $u$ and $v$ are termini of alternating paths of length $i-1$ whose final edges
are red.  These two paths together with the edge $uv$ allow us to define
$f: F^B_{2i-1} \to G$.  This is a certificate of minimality for $g: G \to D^{B}_{2i-1}$.
Similarly, in the next case (Elseif $B_i = \emptyset$) we can find a 
certificate of minimality $f: F^R_{2i-1} \to G$ for $g: G \to D^R_{2i-1}$.
Finally, in the last case $R_i \neq \emptyset$ and $B_i \neq \emptyset$.  Using the
arguments above, we can find $f_B: F^B_{2i-1} \to G$ and $f_R: F^R_{2i-1} \to G$.
This pair of maps is a certificate of minimality for $g: G \to D_{2i}$.

\section{Proofs of Theorems~\ref{thm:main1} and~\ref{thm:main2}}

\emph{Proof of Theorem~\ref{thm:main1}:}
Suppose $F_{2k} \to G$ and suppose to the contrary $G \to D_{2k}$.
Then by composition, $F_{2k} \to D_{2k}$ contrary to 
Lemma~\ref{lem:FnotMap}. Similarly, if $F^R_{2k-1} \to G$,
then $G \not\to D^B_{2k-1}$, and if $F^B_{2k-1} \to G$, then
$G \not\to D^{R}_{2k-1}$.

Conversely, suppose that $F_{2k} \not\to G$.  Run the algorithm in
Figure~\ref{fig:dualityalgorithm}.  First observe that $F_{2k}$ maps
to any closed alternating walk.  We can conclude that $G$ does not
contain a closed alternating walk. In particular, the algorithm
does not terminate at Step~\ref{step:Smooth}, but rather the main 
loop terminates with a map $g: G \to D_{2i}$.  In the post processing
step we construct at least one of the following maps: $f: F_{2i-2} \to G$,
$f_R: F^R_{2i-1} \to G$, or $f_B: F^B_{2i-1} \to G$.
By Proposition~\ref{prop:subgraphs} and the assumption $F_{2k} \not\to G$,
we have $2i-2 < 2k$.  Hence, $2i \leq 2k$ and $G \to D_{2i} \to D_{2k}$.

Suppose $F^R_{2k-1} \not\to G$.  Again the algorithm cannot discover a
smooth subgraph in $G$, so $g: G \to D_{2i}$ for some $i$.  If $i < k$,
then $G \to D_{2i} \to D^B_{2k-1}$ as required.  The map(s) constructed
in the post processing step ensures $F_{2i-2} \to G$ and thus
$i \leq k$, so $i=k$.  This
implies the certificate of minimality from Step~\ref{postProc} is
either $f: F_{2k-2} \to G$ (in which case $g: G \to D_{2k-1}$ is
returned by the algorithm) or $f: F^B_{2k-1} \to G$ (in which
case $g: G\to D^B_{2k-1}$ is returned by the algorithm).  In
both cases $G \to G^B_{2k-1}$ as required.  The case
$F^B_{2k-1} \not\to G$ is analogous. \qedsymbol

\medskip

\noindent \emph{Proof of Theorem~\ref{thm:main2}:}
Note for any even positive integer $2k$, $\mathcal{F}_{2k} = \{F_{2k}\}$. 
Thus $G \to D_{2k}$ if and only if for all 
$F \in \mathcal{F}_{2k}, F \not\to G$ by Theorem~\ref{thm:main1}. 
For the odd integer $2k-1$, $\mathcal{F}_{2k-1} = \{F^R_{2k-1},F^B_{2k-1}\}$. 
Suppose $F^R_{2k-1} \not\to G$ and $F^B_{2k-1} \not\to G$.
At the termination of the algorithm $g : G \to D_{2i}$.  If $2i < 2k-1$,
then $G \to D_{2k-1}$ as required.  Otherwise, it must be the
case $2i = 2k$.  Consequently, the algorithm must return $F_{2k-2} \to G$ 
as the certificate of minimality (from Case 1 of the post processing).
Hence $G \to D_{2k-1}$ as required. \qedsymbol

\begin{defn}
Let G and H be edge-coloured graphs. The \emph{categorical product} $G \times H$ is the graph 
with vertex set $V(G \times H) = V(G) \times V(H)$, and edges $((u, u'),(v,v'))$ of colour $i$ 
if and only if $u'$ is adjacent with $v'$ with colour $i$ and $u$ is adjacent with $v$ with colour $i$.
\end{defn}

The following corollary follows from~\cite{finiteduality} and Theorem~\ref{thm:main2}.
\begin{theorem}[Ne\v{s}et\v{r}il and Tardif~\cite{finiteduality}]\label{thm:finite}
The pair $( \mathcal{F}, D)$ is a finite homomorphism duality if and only if
$$
D \sim \prod_{F \in \mathcal{F} } D(F).
$$
\end{theorem}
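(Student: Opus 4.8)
The plan is to deduce the statement from the general finite-duality theory of~\cite{finiteduality}, applied to the relational structures that encode \ec graphs, together with the duals supplied by Theorem~\ref{thm:main1}. Concretely, two ingredients suffice: the product-to-factor characterization of homomorphisms into the categorical product, and the defining property of a duality pair $(F,D(F))$. Nothing in the argument is special to alternating paths beyond the fact that each $D(F)$ is genuinely a dual of $F$, which for $F\in\mathcal{F}$ is exactly Theorem~\ref{thm:main1}.

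First I would isolate the \emph{product lemma}: for any \ec graph $G$ and any family $\{H_\alpha\}_\alpha$ of \ec graphs, $G\to\prod_\alpha H_\alpha$ if and only if $G\to H_\alpha$ for every $\alpha$. One direction composes a homomorphism $G\to\prod_\alpha H_\alpha$ with the coordinate projections, which are homomorphisms straight from the definition of $\times$; the other direction takes homomorphisms $f_\alpha\colon G\to H_\alpha$ and forms $x\mapsto(f_\alpha(x))_\alpha$, whose colour-preserving property is checked coordinate by coordinate against the edge set of $G\times H$ in the definition above. This step is routine and valid for arbitrary, in particular infinite, index sets.

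For the $\Leftarrow$ direction of the theorem, suppose $D\sim\prod_{F\in\mathcal{F}}D(F)$. For an arbitrary \ec graph $G$ I would chain equivalences: $G\to D$ iff $G\to\prod_{F\in\mathcal{F}}D(F)$ (homomorphic equivalence), iff $G\to D(F)$ for every $F\in\mathcal{F}$ (product lemma), iff $F\not\to G$ for every $F\in\mathcal{F}$ (the duality pairs $(F,D(F))$). Reading the contrapositive, $G\not\to D$ if and only if $F\to G$ for some $F\in\mathcal{F}$, which is precisely the statement that $(\mathcal{F},D)$ is a finite homomorphism duality.

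For the $\Rightarrow$ direction, set $D^{\ast}:=\prod_{F\in\mathcal{F}}D(F)$. By the previous paragraph $(\mathcal{F},D^{\ast})$ is itself a finite homomorphism duality with obstruction family $\mathcal{F}$, so $D$ and $D^{\ast}$ satisfy $G\to D \iff G\to D^{\ast}$ for every \ec graph $G$. Instantiating $G=D$ yields $D\to D^{\ast}$, and $G=D^{\ast}$ yields $D^{\ast}\to D$; hence $D\sim D^{\ast}$. There is no genuine obstacle here beyond transcribing the standard categorical argument; the one point deserving a moment's care is that the index set is the full infinite family $\mathcal{F}=\bigcup_{k\geq1}\mathcal{F}_k$, so the product lemma must be invoked in its infinite form. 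By Proposition~\ref{prop:subgraphs} the factors $D(F)$ form a chain under $\to$, which is why this a priori infinite product is in fact homomorphically equivalent to a small \ec graph.
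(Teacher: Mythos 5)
The paper never proves this statement: it is quoted verbatim from Ne\v{s}et\v{r}il and Tardif~\cite{finiteduality} and then applied, with $\mathcal{F}=\mathcal{F}_{2k-1}$ and the duals supplied by Theorem~\ref{thm:main1}, to obtain Corollary~\ref{cor:homequivD}. So there is no in-paper argument to compare against; judged on its own, your proof of the statement as written is correct and is the standard categorical one. The universal property of the categorical product ($G\to\prod_\alpha H_\alpha$ iff $G\to H_\alpha$ for every $\alpha$), combined with the defining property of each duality pair $(F,D(F))$, gives $G\to D$ iff $G\to\prod_{F}D(F)$ iff $G\to D(F)$ for all $F$ iff $F\not\to G$ for all $F$, which is the ``if'' direction; and your ``only if'' direction, observing that $(\mathcal{F},\prod_F D(F))$ is itself a duality with the same obstruction family and instantiating $G=D$ and $G=\prod_F D(F)$, is the usual argument and is sound. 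Do note that this proves only the statement as transcribed here, in which the existence of a dual $D(F)$ for each $F\in\mathcal{F}$ is presupposed; the hard content of the cited Ne\v{s}et\v{r}il--Tardif theory (which families can serve as obstructions and how to build their duals) is untouched, and for the paper's obstruction families that content is exactly Theorem~\ref{thm:main1}.

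Your closing remark is the one genuine wobble. A finite homomorphism duality, as defined in the paper, has a \emph{finite} obstruction family $\mathcal{F}=\{F_1,\dots,F_t\}$, and the theorem is meant to be read for such a family; in the paper it is invoked with $\mathcal{F}_{2k-1}=\{F^R_{2k-1},F^B_{2k-1}\}$, a two-element set. Reading $\mathcal{F}$ as the infinite family $\bigcup_{k\ge1}\mathcal{F}_k$ is a misreading of the setting: with that reading the product $\prod_{F}D(F)$ includes the factors $D^R_1$ and $D^B_1$ (which, incidentally, are incomparable, so the factors do not form a chain as you claim via Proposition~\ref{prop:subgraphs}) and collapses to something homomorphically equivalent to the edgeless $D_1$, which is not what the theorem or its application intends. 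This does not damage your two main directions, which are phrased for a generic family equipped with duals, but the final paragraph should be corrected or removed.
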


It is well known that $G \to X \times Y$ if and only if $G \to X$ and $G \to Y$.
Hence, the following corollary shows when $G \to D^R_{2k-1}$ and $G \to D^B_{2k-1}$ 
it must be the case that $D \to D_{2k-1}$.  (This follows from our certificates
of minimality.)  We conclude that if $G \to D \in \mathcal{D}$ there is a
well defined minimum element $D'$ such that $G \to D'$.  

\begin{cor}\label{cor:homequivD}
The \ec $D_{2k-1}$ can be expressed as 
$$D_{2k-1} \sim D^R_{2k-1} \times D^B_{2k-1}.$$
\end{cor}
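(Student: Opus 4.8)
The plan is to establish $D_{2k-1} \sim D^R_{2k-1} \times D^B_{2k-1}$ by exhibiting homomorphisms in both directions. The direction $D_{2k-1} \to D^R_{2k-1} \times D^B_{2k-1}$ is immediate: by Proposition~\ref{prop:subgraphs} we have the embedding homomorphisms $D_{2k-1} \to D^R_{2k-1}$ and $D_{2k-1} \to D^B_{2k-1}$ (the targets are obtained from $D_{2k-1}$ simply by adding a loop at $0$), and since $G \to X \times Y$ whenever $G \to X$ and $G \to Y$, the product map works. So the content is entirely in the reverse direction $D^R_{2k-1} \times D^B_{2k-1} \to D_{2k-1}$.

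For the reverse direction I would invoke Theorem~\ref{thm:main2} rather than construct a map by hand. Write $P := D^R_{2k-1} \times D^B_{2k-1}$. Since $G \to X \times Y$ iff $G \to X$ and $G \to Y$, showing $P \to D_{2k-1}$ is equivalent (by Theorem~\ref{thm:main2}) to showing that no member of $\mathcal{F}_{2k-1} = \{F^R_{2k-1}, F^B_{2k-1}\}$ maps into $P$; equivalently, neither $F^R_{2k-1}$ nor $F^B_{2k-1}$ maps into $P$. Now suppose for contradiction that $F^B_{2k-1} \to P$. Composing with the two projections gives $F^B_{2k-1} \to D^R_{2k-1}$ and $F^B_{2k-1} \to D^B_{2k-1}$. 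But Lemma~\ref{lem:FnotMap} states precisely that $F^B_{2k-1} \not\to D^R_{2k-1}$ for odd exponent $2k-1$, a contradiction. The symmetric argument (using $F^R_{2k-1} \not\to D^B_{2k-1}$ from Lemma~\ref{lem:FnotMap}) rules out $F^R_{2k-1} \to P$. Hence no $F \in \mathcal{F}_{2k-1}$ maps into $P$, and Theorem~\ref{thm:main2} yields $P \to D_{2k-1}$.

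Combining the two directions gives $D_{2k-1} \sim D^R_{2k-1} \times D^B_{2k-1}$, completing the proof. The only subtlety to double-check is that Theorem~\ref{thm:main2} is being applied to the (possibly large, possibly disconnected) graph $P$, which is fine since the theorem is stated for an arbitrary \ec graph $G$; and that the characterization $G \to X \times Y \iff G \to X \text{ and } G \to Y$, noted in the excerpt just before the corollary, applies here with $G = P$ and with $G = F$ for $F \in \mathcal{F}_{2k-1}$. I do not anticipate a genuine obstacle: the result is essentially a formal consequence of Lemma~\ref{lem:FnotMap} together with the duality Theorem~\ref{thm:main2}, which is why the excerpt flags it as following from the certificates of minimality. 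If one instead wanted a self-contained, construction-based proof, the main work would be writing down an explicit retraction of $D^R_{2k-1} \times D^B_{2k-1}$ onto (a copy of) $D_{2k-1}$ — for instance by sending a pair $(a,b)$ to whichever coordinate has larger absolute value, using $0$ as a tiebreaker and checking the loop and non-loop edge cases separately — but routing through Theorem~\ref{thm:main2} avoids that case analysis entirely.
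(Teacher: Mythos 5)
Your proof is correct, and it differs from the paper's in an instructive way. The paper does not argue the two directions by hand: it obtains the corollary by citing the Ne\v{s}et\v{r}il--Tardif characterization (Theorem~\ref{thm:finite} from~\cite{finiteduality}), which says that $(\mathcal{F},D)$ is a finite homomorphism duality if and only if $D \sim \prod_{F \in \mathcal{F}} D(F)$; combining this with Theorem~\ref{thm:main2} (which exhibits $(\mathcal{F}_{2k-1}, D_{2k-1})$ as a finite duality) and the single-path dualities of Theorem~\ref{thm:main1} (which identify $D(F^B_{2k-1}) = D^R_{2k-1}$ and $D(F^R_{2k-1}) = D^B_{2k-1}$) gives $D_{2k-1} \sim D^R_{2k-1} \times D^B_{2k-1}$ in one line. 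You instead re-derive the relevant instance of that general theorem from the paper's own ingredients: the easy direction via the embeddings of Proposition~\ref{prop:subgraphs} and the standard product property, and the hard direction by applying Theorem~\ref{thm:main2} to $P = D^R_{2k-1} \times D^B_{2k-1}$ and ruling out $F^B_{2k-1} \to P$ and $F^R_{2k-1} \to P$ by composing with the projections and invoking Lemma~\ref{lem:FnotMap}. Your appeal to Theorem~\ref{thm:main2} for the possibly disconnected graph $P$ is legitimate, since that theorem is stated for arbitrary \ec graphs (the connectivity assumption is only a convenience in the algorithm). What your route buys is self-containedness --- no external reference is needed, and the argument makes visible exactly which internal facts (the non-maps of Lemma~\ref{lem:FnotMap} plus the duality of Theorem~\ref{thm:main2}) force the equivalence; what the paper's route buys is brevity and an explicit link to the general theory of finite dualities, which is part of the paper's broader narrative.
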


\section{Future Work}

As stated in the introduction, our aim in this paper was to find the
duals of alternating paths and examine them for similarities with transitive
tournaments.  On the one hand we have that any \ec graph  $G$ without a closed
alternating walk admits a homomorphism to $\mathcal{D}$.  The ordering
on $\mathcal{D}$ ensures that there is a well defined minimum element
in $\mathcal{D}$ to which $G$ maps.  This is analogous to acyclic digraphs
admitting a homomorphism to a minimum transitive tournament.  That is,
$\mathcal{D}$ maybe viewed as a \emph{calibrating family}
as defined by Hell and Ne\v{s}et\v{r}il~\cite{thebook}.

On the other hand, our attempts to generalize other results remain open.
For example, given an undirected graph, 
the Gallai-Hasse-Roy-Vitaver Theorem (\cite{Gallai66,Hasse64,Roy67,Vitaver62}
see also Corollary 1.21~\cite{thebook}) gives
the well known connection between colourings of the graph and
acyclic orientations of the graph.  Specifically using the duality
pair $(\vec{P}_{n+1},T_n)$ the following is immediate.
\begin{theorem}\label{thm:ghrv}
A graph $G$ is $n$-colourable if and only if there is an orientation $D$
of $G$ such that $\vec{P}_{n+1} \not\to D$.
\end{theorem}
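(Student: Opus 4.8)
The plan is to obtain the theorem directly from the duality pair $(\vec{P}_{n+1}, T_n)$ recalled in the introduction, together with the standard dictionary between homomorphisms into $T_n$ and proper colourings. The one fact I would record first is that a homomorphism $c \colon D \to T_n$ is precisely a map $c \colon V(G) \to \{1, \dots, n\}$ with $c(u) < c(v)$ for every arc $uv$ of $D$; in particular $c(u) \neq c(v)$ for every edge $uv$ of the underlying graph $G$, so any such $c$ is a proper $n$-colouring of $G$. In the other direction, a proper $n$-colouring $c$ of $G$ determines an orientation of $G$ by directing $uv$ from $u$ to $v$ exactly when $c(u) < c(v)$; this is well defined since $c(u) \neq c(v)$ on an edge, and by construction $c$ is then a homomorphism to $T_n$.

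For the forward direction I would take a proper $n$-colouring $c$ of $G$, form the orientation $D$ just described, observe $c \colon D \to T_n$, hence $D \to T_n$, and conclude $\vec{P}_{n+1} \not\to D$ from the duality pair. For the converse I would start from an orientation $D$ of $G$ with $\vec{P}_{n+1} \not\to D$, invoke the duality pair to obtain a homomorphism $D \to T_n$, and read off a proper $n$-colouring of $G$ from it via the dictionary above, so that $G$ is $n$-colourable.

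I do not expect a genuine obstacle here: all of the substance is already packaged into the duality pair $(\vec{P}_{n+1}, T_n)$, and the remaining content is only the routine translation between colourings and homomorphisms to $T_n$. The single point that merits a sentence of care is that the hypothesis is stated as $\vec{P}_{n+1} \not\to D$ rather than literal acyclicity of $D$: a directed path on $n+1$ vertices is exactly a strictly increasing chain of $n+1$ labels, which an $n$-colouring forbids, and this equivalence is precisely what the duality pair encodes, so no extra argument about directed cycles is needed.
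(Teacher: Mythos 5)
Your proposal is correct and is exactly the argument the paper has in mind: the paper presents this theorem as an immediate consequence of the duality pair $(\vec{P}_{n+1}, T_n)$, and your write-up simply makes explicit the routine translation between homomorphisms $D \to T_n$ and proper $n$-colourings of $G$ in both directions.
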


The connection between orientations and colourings of a graph also
appears in Minty's Theorem.  Let $D$ be an orientation of a graph $G$. 
Given a cycle $C$ of $D$, we select a direction of traversal.
Let  $|C^+|$ denote the set of forward arcs 
of $C$, and $|C^-|$ denote the set of backward arcs of $C$ with respect
to the traversal direction. 
Define $\tau(C) = \max\{|C|/|C^-|, |C|/|C^+|\}$ 
to be the \emph{imbalance} of $C$. 

\begin{theorem}[Minty~\cite{Minty62}]\label{thm:minty}
For any graph $G$, 
$$
\chi (G) = \min_{D} \{ \lceil \max \{\tau(C): \emph{C is a cycle of D}\} \rceil 
\}
$$
where the minimum is over all acyclic orientations $D$ of $G$.
\end{theorem}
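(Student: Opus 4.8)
The plan is to prove the two inequalities $\chi(G) \le \min_D \lceil \max_C \tau(C)\rceil$ and $\chi(G) \ge \min_D \lceil \max_C \tau(C)\rceil$ separately, both minima ranging over acyclic orientations $D$. Throughout I use the reformulation that, for a cycle $C$ traversed so that it has $p := |C^+|$ forward and $q := |C^-|$ backward arcs, the inequality $\tau(C) \le k$ (for a positive integer $k$) is equivalent to the pair $p \le (k-1)q$ and $q \le (k-1)p$; this is immediate from $\tau(C) = \max\{(p+q)/q,\,(p+q)/p\}$ and is independent of the traversal direction.

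For $\chi(G) \ge \min_D(\cdots)$, that is, building a good orientation from a colouring, I would start from a proper colouring $c$ of $G$ with $\chi(G)$ colours and orient each edge from its lower-coloured endpoint to its higher-coloured endpoint. This orientation is acyclic because $c$ strictly increases along every arc. Fixing any cycle $C$ with a traversal, the forward arcs contribute positive colour increments and the backward arcs negative ones; since $C$ closes up, the total forward increment equals the total backward decrement. Each individual change lies in $\{1,\dots,\chi(G)-1\}$, so bounding this common sum above by $(\chi(G)-1)p$ and below by $q$ (and symmetrically by $(\chi(G)-1)q$ and $p$) yields $q \le (\chi(G)-1)p$ and $p \le (\chi(G)-1)q$, i.e. $\tau(C) \le \chi(G)$. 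Hence $\lceil \max_C \tau(C)\rceil \le \chi(G)$ for this $D$, giving the inequality.

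The substantive direction is $\chi(G) \le \min_D(\cdots)$: for every acyclic orientation $D$ I must produce a proper colouring with $k := \lceil \max_C \tau(C)\rceil$ colours. The plan is to find an integer-valued potential $\phi$ on $V(G)$ satisfying $1 \le \phi(v)-\phi(u) \le k-1$ for every arc $u \to v$ of $D$; then $c(v) := \phi(v) \bmod k$ is a proper $k$-colouring, since $\phi$ changes by a nonzero amount modulo $k$ across every arc. Existence of such $\phi$ is a system of difference constraints, so I would build the constraint digraph $H$ on $V(G)$ carrying, for each arc $u\to v$ of $D$, an edge $u \to v$ of weight $+1$ (encoding $\phi(v)-\phi(u)\ge 1$) and an edge $v \to u$ of weight $-(k-1)$ (encoding $\phi(u)-\phi(v)\ge -(k-1)$). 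By the standard feasibility theorem for difference constraints, an integer solution exists precisely when $H$ has no directed cycle of strictly positive total weight.

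It remains to verify this no-positive-cycle condition from the hypothesis $\max_C \tau(C) \le k$, and this is the step I expect to be the main obstacle. A directed cycle of $H$ projects to a closed walk in $G$ following some $D$-arcs forward (weight $+1$ each) and others backward (weight $-(k-1)$ each), with total weight $p - (k-1)q$ where $p,q$ count forward and backward traversals. For a \emph{simple} cycle of $H$ this closed walk is a genuine cycle $C$ of $D$ with a chosen traversal, and the reformulated hypothesis $p \le (k-1)q$ forces $p-(k-1)q \le 0$. To handle arbitrary directed cycles of $H$, I would decompose the edge multiset of such a cycle, which has equal in- and out-degree at every vertex, into simple directed cycles of $H$, note that the total weight is the sum of the weights of these simple cycles, and conclude each summand is nonpositive by the previous case (degenerate length-two cycles using an edge in both directions contribute $2-k \le 0$). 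Hence no directed cycle of $H$ has positive weight, the potential $\phi$ exists, and the required $k$-colouring follows; combined with the first direction this establishes the equality.
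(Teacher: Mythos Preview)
The paper does not prove this theorem at all: it is stated in the Future Work section as a classical result of Minty, with a citation, and is used only to motivate a question about possible edge-coloured analogues. There is therefore no proof in the paper to compare against.

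That said, your argument is a correct and standard proof of Minty's theorem. The easy direction (orienting along a proper $\chi(G)$-colouring and bounding $\tau(C)$ via the telescoping of colour increments around a cycle) is exactly the usual argument. For the harder direction your reduction to a system of difference constraints $1 \le \phi(v)-\phi(u) \le k-1$ and the Bellman--Ford style feasibility criterion (no positive-weight directed cycle in the constraint digraph $H$) is the classical approach; the decomposition of an arbitrary closed walk in $H$ into simple directed cycles, each of which either projects to a genuine cycle $C$ of $G$ (giving weight $p-(k-1)q \le 0$ from $\tau(C)\le k$) or is a length-two back-and-forth of weight $2-k$, is the right way to finish. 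One tiny edge case you may want to flag explicitly: the $2$-cycle bound $2-k \le 0$ needs $k \ge 2$, but if $k=1$ then $G$ has no cycles (any cycle in an acyclic orientation has $\tau(C)\ge 2$), so $G$ is a forest and the statement as written degenerates; this is an artefact of the theorem statement rather than a gap in your reasoning.
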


We wonder if these theorems can be generalized. 

\begin{quest}
Are there analogues to  Theorems~\ref{thm:ghrv} and~\ref{thm:minty} 
using the family $\mathcal{D}$ for \ec graphs?
\end{quest}

On the surface the idea of assigning colours to
the edges of a graph and minimizing the length of an alternating path 
will not work. Assigning all edges to blue produces a \ec with 
longest alternating path of length one and a homomorphism to $D_1^B$,
independently of the chromatic number of the graph.

One may also consider the duals of other \ec paths.  The variety
(closure under products and retracts) of these duals gives the
set of \ec graphs admitting a \emph{near unanimity function} of
order $3$: $NU_3$.

\begin{quest}
Can we easily describe duals of other \ec paths?
\end{quest}

Finally, we can also study the case with more edge colours or even $(m,n)$-mixed
graphs (graphs with $m$ edge sets and $n$ arc sets~\cite{NesetrilRaspaud00}).
The introduction of arcs and edges changes the category under question with
significant implications on duality pairs. Replacing unordered coloured edges
with a pair of oppositely directed arcs (of the same colour) to move our
setting to \ec digraphs causes our main results to no longer hold.  
For example, $(F_1^R,D_1^B)$ is no longer a duality pair.  A single red
arc is a counterexample.  In fact, in this setting $F_1^R$ has no dual.

\begin{quest}
Can the duality results here be generalized to $(m,n)$-mixed graphs?
\end{quest}

\end{document}